\theoremstyle{plain}
\newtheorem{thm}{Theorem}[section]
\newtheorem{lem}[thm]{Lemma}
\newtheorem{prop}[thm]{Proposition}
\newtheorem*{prob*}{Problem}
\newtheorem*{ques*}{Question}
\newtheorem*{thm*}{Theorem}
\theoremstyle{definition}
\newtheorem{defn}[thm]{Definition}
\newtheorem{conj}[thm]{Conjecture}
\newtheorem*{defn*}{Definition}
\newtheorem{rem}[thm]{Remark}
\newtheorem{rem*}[thm]{Remark}
\newtheorem{question}[thm]{Question}
\numberwithin{equation}{section}
\newcommand{\Z}{\mathbb Z}
\newcommand{\N}{\mathbb N}
\newcommand{\Q}{\mathbb Q}
\newcommand{\T}{\mathbb{T}}
\newcommand{\Hc}{\mathcal{H}}
\newcommand{\innprod}[2]{\left\langle #1, #2 \right\rangle}
\renewcommand{\tilde}{\widetilde}
\renewcommand{\hat}{\widehat}
	\title[On the maximal spectral type of Nilsystems]{On the maximal spectral type of Nilsystems}
    \author[E. Ackelsberg]{Ethan Ackelsberg}
\address{Institute for Advanced Study \\ 1 Einstein Drive \\ 
Princeton, New Jersey \\
08540, USA}
\email{eackelsberg@ias.edu}
	\author[F. K. Richter]{Florian K. Richter}
	\address{\'{E}cole Polytechnique F\'{e}d\'{e}rale de Lausanne (EPFL)\\ Lausanne, Vaud, Switzerland}
	\email{f.richter@epfl.ch}
\author[O. Shalom]{Or Shalom}
\address{Institute for Advanced Study \\ 1 Einstein Drive \\ 
Princeton, New Jersey \\
08540, USA}
\email{Or.Shalom@ias.edu}
\date{\today}
\begin{document}
	\begin{abstract}
	Let $(G/\Gamma,R_a)$ be an ergodic $k$-step nilsystem for $k\geq 2$. We adapt an argument of Parry \cite{Parry} to show that $L^2(G/\Gamma)$ decomposes as a sum of a subspace with discrete spectrum and a subspace of Lebesgue spectrum with infinite multiplicity. In particular, we generalize a result previously established by Host--Kra--Maass \cite{HKM} for $2$-step nilsystems and a result by Stepin \cite{Stepin} for nilsystems $G/\Gamma$ with connected, simply connected $G$.
    \end{abstract}
    \maketitle
    \tableofcontents
    \section{Introduction}
    A \emph{nilmanifold} is a compact manifold of the form $G/\Gamma$ where $G$ is a nilpotent Lie group and $\Gamma$ is a discrete co-compact subgroup. If $G$ is $k$-step nilpotent, then we say that $G/\Gamma$ is a \emph{$k$-step nilmanifold}.
    For any $a\in G$ let $R_a\colon G/\Gamma\rightarrow G/\Gamma$ denote the left-translation by $a$ on $G/\Gamma$, that is, $R_a (g \Gamma) = (a g) \Gamma$ for all $g\Gamma\in G/\Gamma$. The resulting topological dynamical system $(G/\Gamma, R_a)$ is called a \emph{nilsystem}. Every nilmanifold $G/\Gamma$ admits a unique left-translation invariant Borel probability measure $\mu_{G/\Gamma}$ called the \emph{Haar measure} on $G/\Gamma$. This allows us to associate to every topological nilsystem $(G/\Gamma, R_a)$ a natural measure-preserving system $(G/\Gamma,\mu_{G/\Gamma}, R_a)$. The Koopman-representation of the transformation $R_a$ is the unitary operator (which, in an abuse of notation, we also denote by $R_a$) on $L^2(G/\Gamma,\mu_{G/\Gamma})$ defined by $R_a f = f\circ R_a$. We say that the nilsystem is \emph{ergodic} if the only $R_a$-invariant functions in $L^2(G/\Gamma,\mu_{G/\Gamma})$ are the constant functions. Due to their rich underlying algebraic structure, the dynamical behaviour of nilsystems is remarkably complaisant:

    \begin{prop}[cf.~\cite{Parry-ergodic_properties}, Section 2] \label{prop: equivalences}
        Let $G/\Gamma$ be a nilmanifold and $a \in G$. The following are equivalent:
        \begin{enumerate}
            \item The topological system $(G/\Gamma, R_a)$ is transitive, i.e., there exists a point with dense orbit.
            \item The topological system $(G/\Gamma, R_a)$ is minimal, i.e., every point has a dense orbit.
            \item The topological system $(G/\Gamma, R_a)$ is uniquely ergodic with unique $G$-invariant measure $\mu_{G/\Gamma}$.
            \item The measure-preserving system $(G/\Gamma,\mu_{G/\Gamma}, R_a)$ is ergodic.
        \end{enumerate}
    \end{prop}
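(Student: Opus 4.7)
The plan is to verify the cycle $(1) \Rightarrow (2) \Rightarrow (3) \Rightarrow (4) \Rightarrow (1)$, separating the implications that are formal from those that exploit the nilpotent structure. Three implications are immediate: $(2) \Rightarrow (1)$ is tautological; $(3) \Rightarrow (4)$ holds because unique ergodicity forces the ergodic decomposition of $\mu_{G/\Gamma}$ to be trivial; and $(4) \Rightarrow (1)$ follows from Birkhoff, since $\mu_{G/\Gamma}$-generic points equidistribute to $\mu_{G/\Gamma}$ and hence have orbit closures equal to $\supp \mu_{G/\Gamma} = G/\Gamma$. A fourth easy implication, $(3) \Rightarrow (2)$, also holds via Krylov--Bogolyubov applied to any hypothetical proper closed invariant subset. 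The two implications that genuinely use the nilpotent structure are $(1) \Rightarrow (2)$ and $(2) \Rightarrow (3)$.

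For $(1) \Rightarrow (2)$, my approach is to use distality of nilsystems: via the lower central series $G = G_1 \supset G_2 \supset \cdots \supset G_{k+1} = \{e\}$, the system $(G/\Gamma, R_a)$ factors as a tower of isometric extensions over the maximal group-rotation factor $G/[G,G]\Gamma$, so no two distinct points have orbits that approach each other arbitrarily closely. In a distal topological system every orbit closure is minimal, so transitivity automatically upgrades to minimality.

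The heart of the argument is $(2) \Rightarrow (3)$, which I would prove by induction on the nilpotency step $k$. The base case $k = 1$ is a translation on a compact abelian Lie group, where density of $\{a^n \Gamma\}$ classically implies unique ergodicity via Fourier analysis on characters. For the inductive step, consider the central quotient $\pi \colon G/\Gamma \to G/G_k \Gamma$ onto a $(k-1)$-step nilsystem, which inherits minimality and hence is uniquely ergodic by the inductive hypothesis. Given an arbitrary $R_a$-invariant probability measure $\nu$ on $G/\Gamma$, the pushforward $\pi_*\nu$ must coincide with the Haar measure on $G/G_k\Gamma$, and the task reduces to identifying the disintegration of $\nu$ along the fibers of $\pi$, which are homogeneous spaces of the central subgroup $G_k$.

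The main obstacle is this fiberwise identification. The strategy I would use is to Fourier-expand the conditional measures of $\nu$ along the central torus $G_k/(G_k \cap \Gamma)$ and exploit the fact that, because $G_k$ lies in the center of $G$, the cocycle describing the extension is \emph{affine} along central fibers. Invariance of $\nu$ then translates into a functional equation for each nontrivial central character, and minimality of the base rotation combined with a standard coboundary argument forces the associated Fourier coefficient to vanish; only the trivial character survives, which pins down each conditional measure as fiber Haar and therefore forces $\nu = \mu_{G/\Gamma}$. Executing this affine-cocycle/coboundary step cleanly, together with the bookkeeping needed to deal with the possibly disconnected component of $G$ and its interaction with $\Gamma$, is the most delicate part of the argument.
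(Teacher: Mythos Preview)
The paper does not supply a proof of this proposition: it is quoted as a known result with a reference to Parry, \emph{Ergodic properties of affine transformations and flows on nilmanifolds}, Section~2. So there is no in-paper argument to compare against.

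Your outline is the standard route and is essentially Parry's original argument. The formal implications $(2)\Rightarrow(1)$, $(3)\Rightarrow(4)$, $(4)\Rightarrow(1)$ are fine as stated. For $(1)\Rightarrow(2)$, the distality argument is correct: the lower-central-series tower exhibits the nilsystem as an iterated isometric extension of a group rotation, so the system is distal, and in a distal system every orbit closure is minimal (Ellis). For $(2)\Rightarrow(3)$, the induction on the step via the central extension $G/\Gamma \to G/G_k\Gamma$ and Fourier expansion of the fiber conditionals is exactly how Parry proceeds. One small caution: the step you flag as ``a standard coboundary argument'' is the real content and is slightly more than a coboundary lemma. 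What one actually uses is that if some nontrivial central Fourier coefficient $F_\chi$ does not vanish identically, then $|F_\chi|$ is $R_a$-invariant on the base, hence constant by the inductive unique ergodicity, and $F_\chi/|F_\chi|$ becomes an eigenfunction on the $(k{-}1)$-step base with a prescribed ``commutator'' eigenvalue; one then rules this out using the structure of eigenfunctions on the base (they all come from the abelianization). Your sketch gestures at this but does not quite name it; as you say, this is where the care is needed, and you should be prepared to invoke the inductive hypothesis in the stronger form ``eigenfunctions of the base are measurable with respect to its Kronecker factor'' rather than mere minimality.
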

Let $S^1$ denote the unit circle in the complex plane. If $\mu$ and $\nu$ are two Borel measures on $S^1$, we say that $\mu$ is \emph{absolutely continuous} with respect to $\nu$ and write $\mu\ll \nu$ if $\nu(A)=0\Rightarrow \mu(A)=0$ for any Borel subset $A\subseteq S^1$. This introduces a natural partial order on the family of Borel measures on $S^1$. We say that $\mu$ and $\nu$ are \emph{equivalent} if $\mu\ll \nu$ and $\nu \ll \mu$ and write $\nu\approx \mu$. The \emph{type} of a Borel measure $\mu$ is defined as the equivalence class of all Borel measures that are equivalent to $\nu$.

\begin{defn}[Spectral measure and spectral type]
\label{def_spectal_measure_spectral_type}
Let $U$ be a unitary operator on a Hilbert space $\Hc$. The \emph{spectral measure} of $h\in \Hc$ is the unique finite Borel measure $\sigma_h$ on $S^1$ satisfying 
\[
\left<U^n h , h \right> = \int_{S^1} t^n d\sigma_h(t),\qquad \forall n\in\mathbb{Z}.
\]
The existence of this measure is guaranteed by Herglotz's theorem (see~\cite[Sections 7.6 and 7.8]{katznelson}). Moreover, there exists a unique finite Borel measure $\sigma$ on $S^1$, called the \emph{maximal spectral type} of the operator $U$, with the property that every spectral measure of an element in $\Hc$ is absolutely continuous with respect to $\sigma$ and conversely, every finite Borel measure absolutely continuous with respect to $\sigma$ is the spectral measure of some element of $\Hc$ \cite{Halmos}.
\end{defn}

Spectral theory provides an important framework for analyzing linear operators and links operator theory to harmonic analysis.
In ergodic theory, the study of the spectral properties of the Koopman representation of a measure-preserving transformation follows a long history, dating back to the foundational works of von Neumann and Koopman in the 1930s (\cite{Koopman}, \cite{VN1}, \cite{VN2}).
It directly relates to important ergodic-theoretic properties of the underlying dynamical system (such as mixing properties or rigidity phenomena), plays an important role in the classification of measure-preserving systems and their joinings
, and aids the study of the stability and long-term behaviour of the system, which connects to recurrence and convergence problems in ergodic theory.
For more information about the spectral theory of dynamical systems and its applications, we refer the reader to the survey \cite{Lemanczyksurvey}. 

In general, spectral measures are difficult to compute and even for some of the most well-studied systems the maximal spectral type remains unknown. The purpose of this paper is to settle this problem for the class of nilsystems. Nilsystems and some generalizations thereof have become increasingly important in ergodic theory due to their connections to the structure theory of ergodic averages \cite{host2005nonconventional}, \cite{ziegler2007universal}, \cite{shalom1}, \cite{shalom2}, additive combinatorics \cite{hk-book}, number theory \cite{Zieglersurvey}, \cite{FH1}, \cite{FH2}, nilspace theory \cite{candela}, \cite{gmv}, \cite{gmv2}, \cite{gmv3}, and Higher-order Fourier analysis \cite{gtz}, \cite{tao2012higher},\cite{candela-szegedy-inverse}, \cite{jt21-1}, \cite{jst-tdsystems}, \cite{cgss-2023}. Therefore, determining the maximal spectral type of nilsystems finds several applications. To state our main result, we need one more definition.

    
    \begin{defn}\label{infinite Lebesgue spectrum}
     Let $U$ be a unitary operator on a Hilbert space $\Hc$.
     \begin{itemize}
        \item $U$ has \emph{discrete spectrum} if the maximal spectral type is a discrete measure on $S^1$, or equivalently, if the eigenfunctions of $U$ span a dense subspace of $\Hc$.  
        \item $U$ has \emph{infinite Lebesgue spectrum} if $\Hc$ decomposes into a direct sum of infinitely many pairwise orthogonal closed $U$-invariant subspaces, each with maximal spectral type equivalent to the Lebesgue measure on $S^1$.
        \item $U$ has \emph{compact-Lebesgue spectrum} if $\Hc= \Hc_d\oplus \Hc_l$ is an orthogonal decomposition of $\Hc$ into a closed subspace $\Hc_d$ with discrete spectrum and a closed subspace $\Hc_l$ with infinite Lebesgue spectrum.
     \end{itemize}
    \end{defn}
    In this paper we will only consider separable Hilbert spaces. Therefore, infinite Lebesgue spectrum is a synonym of countable Lebesgue spectrum. 

    The main result of this paper is the following:

    \begin{thm}\label{main}
    Let $k\geq 2$, and let $(G/\Gamma,\mu_{G/\Gamma}, R_a)$ be an ergodic $k$-step nilsystem. Then $R_a$ has compact-Lebesgue spectrum on $L^2(G/\Gamma,\mu_{G/\Gamma})$.
    \end{thm}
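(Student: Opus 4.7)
The plan is to follow the strategy announced in the abstract, adapting Parry's argument and proceeding by induction on the nilpotency class $k$. The base case $k=1$ is an ergodic rotation on a compact abelian Lie group, which has pure discrete spectrum, so nothing is to be proved. For the inductive step, assume the result for all $(k-1)$-step ergodic nilsystems.

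First I would decompose $L^2(G/\Gamma,\mu_{G/\Gamma})$ according to the isotypic components of the left-translation action of $Z_0$, the identity component of the center $Z(G)$. Since $Z_0$ commutes with $a$, one obtains an $R_a$-invariant orthogonal decomposition
\[
L^2(G/\Gamma,\mu_{G/\Gamma}) \;=\; \Hc_0 \;\oplus\; \bigoplus_{\chi\neq 1}\Hc_\chi,
\]
where $\chi$ ranges over characters of the torus $Z_0/(Z_0\cap\Gamma)$ and $\Hc_\chi=\{f\in L^2:\, f(zx)=\chi(z)f(x)\text{ for all }z\in Z_0\}$. Functions in $\Hc_0$ are $Z_0$-invariant and descend to the quotient $G/(Z_0\Gamma)$, which is a $(k-1)$-step nilmanifold on which $R_a$ remains ergodic by \Cref{prop: equivalences}; the inductive hypothesis then gives a compact-Lebesgue splitting of $\Hc_0$.

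The main step is to show that, for each non-trivial character $\chi$, the operator $R_a|_{\Hc_\chi}$ has Lebesgue spectrum and that the combined multiplicity over $\chi\neq 1$ is infinite. Following Parry, one computes the correlations $\langle R_a^n f,g\rangle$ for $f,g\in\Hc_\chi$ by exploiting the commutator identity $x^{-1}a^n x = a^n \cdot c_n(x)$ with $c_n(x)\in[G,G]$. The $\chi$-equivariance of $f,g$ along the central direction, combined with the equidistribution of $\{a^n x\}$ in the abelian quotients afforded by ergodicity, produces enough cancellation in the resulting oscillatory integrals to force every spectral measure on $\Hc_\chi$ to be absolutely continuous with respect to Lebesgue measure. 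A Mackey/Kirillov-style decomposition of $\Hc_\chi$ into countably many infinite-dimensional $G$-irreducibles then upgrades this to Lebesgue spectrum of countably infinite multiplicity.

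The main obstacle, I expect, is adapting this plan beyond the connected, simply connected setting of Stepin. For general $G$ the center may be essentially discrete or have an arithmetically intricate interaction with $\Gamma$, so that Kirillov theory is not directly applicable and one cannot naively use a subtorus of $Z(G)$ to run the isotypic decomposition. One must therefore separate the discrete part of the central action, which contributes only to $\Hc_d$, from the toroidal part that drives the Lebesgue spectrum, possibly by passing to a finite cover of $G/\Gamma$ and assembling ergodic components. Making this reduction precise while keeping the Parry-style correlation estimate uniform across all non-trivial $\chi$, and ensuring that the induction closes cleanly at the $(k-1)$-step quotient, is the crux of the argument.
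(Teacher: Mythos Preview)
Your inductive scaffold is close to the paper's, but the heart of the argument --- how one actually gets Lebesgue spectrum on each nontrivial isotypic piece --- is not correctly identified, and this is where the real content lies.

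The paper decomposes not along the identity component $Z_0$ of the center but along the characters of $G_k$, after first arranging (via \cref{rem: connected component}) that $G=\langle G^0,a\rangle$ and that $\Gamma$ contains no nontrivial normal subgroup. The mechanism on $V_\gamma$ is not an oscillatory-integral or equidistribution estimate at all: Parry's criterion (\cref{ParryCriterion}) says that if for every $u\in G_k$ there exists $b_u\in G_{k-1}$ with $[a,b_u]=u$, then left translation by $b_u$ intertwines $R_a$ on $V_\gamma$ with a rotation of the spectral measure by $\gamma(u)$, forcing the maximal spectral type to be rotation-invariant and hence Lebesgue. Your commutator identity $x^{-1}a^nx=a^nc_n(x)$ and the phrase ``enough cancellation in the resulting oscillatory integrals'' do not capture this; there is no analytic estimate here, only an algebraic intertwining.

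The actual new work in the paper --- precisely the obstacle you flag but do not resolve --- is to verify the hypothesis of Parry's criterion when $G$ is not connected. For $k>2$ one uses that $G_{k-1}$ is connected (\cref{prop: connected}), that $g\mapsto[a,g]$ is a homomorphism $G_{k-1}\to G_k$, and a minimality argument showing that if its image were trivial then $[G^0,G_{k-1}]=\{e\}$ too (via density of $\{a^{t_n}\gamma_n\}$ and discreteness of $\mathrm{Hom}(G_{k-1}/G_k(G_{k-1}\cap\Gamma),S^1)$), contradicting $k$-step nilpotency. This argument genuinely breaks when $k=2$ because $G_{k-1}=G$ need not be connected, which is why the paper handles $k=2$ separately. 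Your proposal neither isolates this surjectivity lemma nor explains why the $k=2$ case requires a different treatment; appealing to ``Mackey/Kirillov-style decomposition'' and ``passing to a finite cover'' does not substitute for it.
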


This result was previously established by \cite{Stepin} under the additional assumption that $G$ is connected and simply connected (see also \cite{Strakov} for history and comments) and by Host, Kra and Maass \cite{HKM} for $2$-step nilsystems ($k=2$) in full generality. We give an alternative proof of the $k=2$ case in \cref{app: 2-step}. 

\begin{rem}\label{skewproduct}
 Nilsystems are closely related to another class of systems called skew products. Yet, the results of Theorem \ref{main} fail for these systems. Consider the system $\mathrm{X} = (\mathbb{T}^2,\mu_{\T^2},T)$  equipped with the Haar measure $\mu_{\T^2}$ and the action $T(x,y) = (x + \alpha , y+\varphi(x))$, where $\alpha$ is irrational and $\varphi:\T\rightarrow \mathbb{R}$ is continuous. When $\alpha$ is a Liouville number, Herman \cite{Herman} constructed an absolutely continuous function $\varphi$ for which $\mathrm{X}$ is ergodic, rigid and is not a group rotation. Since rigid systems are singular, this example contradicts the conclusion of Theorem \ref{main}. More related examples are surveyed in \cite{Lemanczyksurvey}
\end{rem}

As an application of our main theorem, we also recover the following theorem.
\begin{thm}
    Let $k\geq 1$ and let $(G/\Gamma,\mu_{G/\Gamma}, R_a)$ be an ergodic $k$-step nilsystem. Let $f\in L^\infty(G/\Gamma,\mu_{G/\Gamma})$ and suppose that $f$ is orthogonal to the subspace spanned by all eigenfunctions of $R_a$. Then $$\lim_{n\rightarrow\infty} \int_{G/\Gamma} T^n f \cdot \overline{f} d\mu_{G/\Gamma} = 0.$$
\end{thm}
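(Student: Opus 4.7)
My plan is to derive this corollary directly from Theorem~\ref{main}; once one has the compact-Lebesgue decomposition, the statement reduces to the Riemann--Lebesgue lemma. The only real case distinction is between $k=1$ (which is not covered by Theorem~\ref{main}) and $k\geq 2$.

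For $k=1$, a $1$-step nilpotent group is abelian, so the ergodic nilsystem $(G/\Gamma, R_a)$ is (isomorphic to) a minimal rotation on the compact abelian group $G/\Gamma$, and by Peter--Weyl the characters of $G/\Gamma$ form an orthonormal basis of $L^2(G/\Gamma)$ consisting of eigenfunctions of $R_a$. Hence $R_a$ has pure discrete spectrum, and any $f\in L^\infty$ orthogonal to the closed span of the eigenfunctions must vanish, making the conclusion trivial.

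For $k\geq 2$, I would invoke Theorem~\ref{main} to obtain an orthogonal decomposition $L^2(G/\Gamma,\mu_{G/\Gamma}) = \mathcal{H}_d \oplus \mathcal{H}_l$ into $R_a$-invariant subspaces such that $\mathcal{H}_d$ has discrete spectrum and $\mathcal{H}_l$ has infinite Lebesgue spectrum. By the characterization in Definition~\ref{infinite Lebesgue spectrum}, $\mathcal{H}_d$ is the closed linear span of the eigenfunctions of $R_a$, so the orthogonality assumption forces $f \in \mathcal{H}_l$. Consequently the spectral measure $\sigma_f$ is absolutely continuous with respect to the maximal spectral type of $R_a|_{\mathcal{H}_l}$, which is equivalent to Lebesgue measure $m$ on $S^1$. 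Writing $d\sigma_f = g\,dm$ for some $g\in L^1(S^1,m)$, Definition~\ref{def_spectal_measure_spectral_type} gives
\[
\int_{G/\Gamma} R_a^n f \cdot \overline{f}\, d\mu_{G/\Gamma} \;=\; \langle R_a^n f, f\rangle \;=\; \int_{S^1} t^n\, d\sigma_f(t) \;=\; \int_{S^1} t^n g(t)\, dm(t),
\]
and the Riemann--Lebesgue lemma yields that this tends to $0$ as $n\to\infty$.

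The only real obstacle in this corollary is Theorem~\ref{main} itself; the deduction presented above uses nothing beyond the definitions and a standard application of Riemann--Lebesgue. The $L^\infty$ hypothesis on $f$ is not actually needed for the proof (any $f \in L^2$ works), which suggests that the statement could be slightly strengthened.
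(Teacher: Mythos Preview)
Your proof is correct and follows essentially the same route as the paper: invoke Theorem~\ref{main} to place $f$ in the Lebesgue-spectrum component $\mathcal{H}_l$, so that $\sigma_f\ll m$, and then apply the Riemann--Lebesgue lemma. Your separate treatment of the $k=1$ case and your observation that $f\in L^2$ suffices are both valid refinements that the paper leaves implicit.
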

This result was previously established for $k=2$ by Host-Kra-Maass \cite{HKM}, and for general $k$ by Griesmer \cite{Griesmer2}, and recently by Frantizkinakis and Kuca \cite{FranKuca}, using different methods. We provide yet another proof.
\begin{proof}
    By \cref{def_spectal_measure_spectral_type} we have $\int_{G/\Gamma} T^n f \cdot \overline{f} d\mu_{G/\Gamma} = \int_{S^1} t^n d\sigma_f$ where $\sigma_f$ is the spectral measure of $f$. Write $L^2(G/\Gamma,\mu_{G/\Gamma}) = \mathcal{H}_d \oplus \mathcal{H}_l$, where $\mathcal{H}_d$ has discrete spectrum and $\mathcal{H}_l$ has infinite Lebesgue spectrum. The subspace spanned by all eigenfunctions corresponds to $\mathcal{H}_d$. By assumption, $f$ is orthogonal to $\mathcal{H}_d$ and hence belongs to $\mathcal{H}_l$. Therefore $\sigma_f$ is absolutely continuous with respect to Lebesgue. The result now follows by Riemann-Lebesgue lemma.
\end{proof}

\subsection*{Acknowledgements}{}  The first and third author are supported by the National Science Foundation under grant DMS-1926686. We would like to thank Mariusz Lema\'{n}czyk for helpful discussions leading to Remark \ref{skewproduct}.

\section{Relevant results about nilsystems}

In this preparatory section we survey some well known results about nilsystems used in the proof of \cref{main}.

Let $G$ be a Lie group. For $a, b \in G$, the \emph{commutator} of $a$ and $b$ is the element $[a,b] = a^{-1}b^{-1}ab \in G$. For subgroups $H_1, H_2 \le G$, we denote by $[H_1, H_2]$ the group generated by the commutators $[a,b]$ with $a \in H_1$, $b \in H_2$. For nilpotent Lie groups, an important associated object is its lower central series,
\[
G=G_1 \unrhd G_2 \unrhd \ldots \unrhd G_k \unrhd G_{k+1}=\{e\},
\]
defined recursively as $G_1 = G$ and $G_{i+1} = [G, G_i]$ for $i \ge 1$.
Recall that $G_{k+1} = \{e\}$ if and only if $G$ is nilpotent of step $\le k$.

\begin{rem} \label{rem: connected component} The following properties of nilsystems are useful for us.
\begin{itemize}
   \item[(1)] If any of the conditions in \cref{prop: equivalences} hold, then $(G/\Gamma, R_a)$ is isomorphic to a nilsystem $(H/\Lambda, R_b)$ where $H$ is generated by $b \in Y$ and $H^0$, the connected component of the identity in $H$ (see \cite[Theorem 10.5]{host2005nonconventional}). 

    \item[(2)] If $N \le \Gamma$ is normal in $G$, then $G/\Gamma\cong (G/N)/(\Gamma/N)$. Thus, by taking the maximal such $N$ with respect to inclusion, we may assume without loss of generality that $\Gamma$ contains no normal subgroup of $G$.
    \end{itemize}
\end{rem}

\begin{prop} \label{prop: connected}
    Let $G/\Gamma$ be a nilmanifold, let $a \in G$ such that $(G/\Gamma, \mu_{G/\Gamma}, R_a)$ is ergodic, and assume that $G = \bigl\langle G^0, a \bigr\rangle$.
    Then $G_i$ is connected for every $i \ge 2$.
\end{prop}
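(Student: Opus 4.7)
My plan is to first show that $G_i \subseteq G^0$ for each $i \ge 2$, and then to prove connectedness of $G_i$ by induction on $i$, using the hypothesis $G = \langle G^0, a\rangle$ to realize commutators as continuous images of connected sets.

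I would begin by noting that $G^0$ is a closed normal subgroup of $G$ and that the quotient $G/G^0$ is generated by the image of $a$, hence cyclic and in particular abelian. Consequently $G_2 = [G,G] \subseteq G^0$, and by normality of $G^0$ this inductively yields $G_i \subseteq G^0$ for every $i \ge 2$. A further useful consequence of $G = \langle G^0, a\rangle$ combined with the normality of $G^0$ is that every element of $G$ takes the form $a^m g$ with $m \in \Z$ and $g \in G^0$, so every commutator $[x,y]$ in $G$ lies in the image of one of the continuous maps $\phi_{m,n}\colon G^0 \times G^0 \to G$, $(g,h) \mapsto [a^m g, a^n h]$. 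Each such image is a connected subset of $G$ containing the identity, which is attained at $(g,h) = (e,e)$ since $[a^m,a^n] = e$. The union $U$ of these images over all $m, n \in \Z$ is therefore a connected set through $e$. Setting $V = U \cup U^{-1}$, each power $V^n$ is the continuous image of a connected space under multiplication and so is connected, and the nested chain $V \subseteq V^2 \subseteq \cdots$ has connected union, namely the subgroup generated by $U$. Its closure is $G_2$, which is therefore connected.

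The inductive step is formally identical: assuming $G_i$ is connected, the generators of $G_{i+1} = [G, G_i]$ are commutators $[a^m g, h]$ with $g \in G^0$ and $h \in G_i$, which lie in the images of continuous maps from the connected space $G^0 \times G_i$ into $G$, each image being connected and containing the identity; the same closure argument then yields connectedness of $G_{i+1}$.

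The main point to justify carefully is that in a nilpotent Lie group each $G_i$ is automatically a closed subgroup, so passing to closures does not enlarge the abstract commutator subgroup constructed above. This is a standard feature of nilpotent Lie groups, and once it is granted the essential content of the argument reduces to the set-theoretic observation that the subgroup generated by a connected subset containing the identity is connected.
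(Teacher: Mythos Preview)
Your argument is correct and takes a genuinely different route from the paper. The paper's proof is a two-line appeal to the literature: it uses ergodicity to check that $a \notin G_2$ (since otherwise the abelian factor $G/G_2\Gamma$ would carry the trivial rotation), and then invokes \cite[Lemma~2.10]{Leibmanpointwise} as a black box. Your proof, by contrast, is self-contained and purely topological: from $G = \langle G^0, a\rangle$ you extract that $G/G^0$ is cyclic, hence $G_2 \subseteq G^0$, and then exploit the crucial observation that $[a^m, a^n] = e$ to exhibit each family of commutators as a connected set through the identity. Notably, your argument never uses the ergodicity hypothesis at all---only the structural assumption $G = \langle G^0, a\rangle$---so you have in fact proved a slightly stronger statement. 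The one point you flag, that the abstract commutator subgroups $G_i$ are automatically closed in a nilpotent Lie group, is indeed standard (it follows from Mal'cev theory, or more directly from the fact that in a connected nilpotent Lie group the lower central series is the image under $\exp$ of the lower central series of the Lie algebra); it is worth a citation but does not undermine the argument.
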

\begin{proof}
    First observe that $a\not\in G_2$: otherwise $X$ admits a non-ergodic factor $G/G_2\Gamma$, which is a contradiction. It then follows by \cite[Lemma 2.10]{Leibmanpointwise} that $G_i$ is connected for $i \ge 2$.
\end{proof}

\begin{prop} \label{prop: eigenfunctions}
    Fix $k \ge 2$, and
    let $(G/\Gamma, \mu_{G/\Gamma}, R_a)$ be an ergodic $k$-step nilsystem.
    If $f \in L^2(X)$ is an eigenfunction, then $f$ is measurable with respect to the factor $G/G_2\Gamma$.
\end{prop}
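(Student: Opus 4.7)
The plan is to induct on the nilpotency class $k$. The base case $k = 1$ is trivial since $G_2 = \{e\}$ gives $G/G_2\Gamma = G/\Gamma$. For the inductive step, I invoke \cref{rem: connected component}(1) to assume $G = \langle G^0, a\rangle$; by \cref{prop: connected}, every $G_i$ with $i \ge 2$ is then connected, and in particular $Z := G_k$ is a connected subgroup of $Z(G)$ with $Z/(Z \cap \Gamma)$ a torus.

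Since $Z \subseteq Z(G)$, the left translations $L_z$ commute with $R_a$, and $L^2(G/\Gamma)$ decomposes $R_a$-equivariantly as
\[
L^2(G/\Gamma) = \bigoplus_{\chi \in \widehat{Z/(Z \cap \Gamma)}} H_\chi, \qquad H_\chi := \{f : f(zg\Gamma) = \chi(z) f(g\Gamma) \text{ for all } z \in Z\}.
\]
The trivial-character piece $H_1 = L^2(G/Z\Gamma)$ is the $L^2$-space of the $(k-1)$-step ergodic quotient nilsystem $(G/Z\Gamma, R_a)$, whose Kronecker-level factor is canonically isomorphic to $G/G_2\Gamma$; the inductive hypothesis therefore shows that every $R_a$-eigenfunction in $H_1$ is measurable with respect to $G/G_2\Gamma$. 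Given an eigenfunction $f$ with $R_a f = \lambda f$, write $f = \sum_\chi f_\chi$; by $R_a$-invariance of each summand, each $f_\chi$ is an eigenfunction for $\lambda$, and the problem reduces to showing $f_\chi = 0$ for every nontrivial $\chi$.

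Assume for contradiction that $0 \neq f \in H_\chi$ with $\chi$ nontrivial. Since $|f|^2 \in H_1$ is $R_a$-invariant and the quotient system is ergodic, $|f|$ is constant; normalize $|f| \equiv 1$. For $b \in G_{k-1}$ the commutator $[b,a]$ lies in $Z \subseteq Z(G)$, which gives $bag = ab[b,a]g = a[b,a]bg$, and hence
\[
R_a(L_b f) = \lambda \, \chi([b,a]) \, L_b f \qquad (b \in G_{k-1}).
\]
Thus $L_b f$ is an eigenfunction with eigenvalue $\lambda \chi([b,a])$, and $b \mapsto \chi([b,a])$ is a continuous group homomorphism from $G_{k-1}$ to $S^1$ (via the Hall identities combined with the centrality of the image). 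The group $G_{k-1}$ is connected for $k \ge 3$; in the sub-case $k = 2$ we replace $G_{k-1}$ with the connected group $G^0$, noting that $[a, G] = [a, G^0]$ since $[a, a] = e$. Because the point spectrum of $R_a$ is countable (as $L^2(G/\Gamma)$ is separable), a continuous map from a connected group into this countable set must be constant; hence $\chi \equiv 1$ on $[a, G_{k-1}]$ (respectively on $[a, G^0]$).

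The final---and most delicate---step is to upgrade this to $\chi \equiv 1$ on all of $Z$, yielding the desired contradiction. Equivalently, it suffices to establish density of $[a, G_{k-1}] \cdot (Z \cap \Gamma)$ in $Z$. Were this subgroup contained in a proper closed subgroup $Z' \subsetneq Z$, then $Z'$ would be $a$-invariant and (by centrality in $G$) $G$-invariant, and quotienting $G/\Gamma$ by $Z'$ would produce a factor nilsystem on which the image of $a$ centralizes the image of $G_{k-1}$; after applying the maximality reduction of \cref{rem: connected component}(2), this is incompatible with the genuine $k$-step ergodicity of $(G/\Gamma, R_a)$. Carrying out this density argument cleanly is the main technical content of the proof.
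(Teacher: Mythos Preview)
The paper's own ``proof'' of this proposition is a one-line citation to Leibman, so there is no argument to match; you are supplying an independent proof. Your approach---decompose $L^2$ over characters of the central torus $Z=G_k$, use the countability of the point spectrum together with connectedness of $G_{k-1}$ (or $G^0$ when $k=2$) to force $\chi\equiv 1$ on $[a,G_{k-1}]$---is correct and natural up through that point.

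The gap is in your final step. Your sketch asserts that if $[a,G_{k-1}]\cdot(Z\cap\Gamma)$ lies in a proper closed $Z'\subsetneq Z$, then in the quotient $G/Z'$ the image of $a$ centralizes the image of $G_{k-1}$, and that this contradicts ``genuine $k$-step ergodicity''. But $(G/Z')_k = G_k/Z'$ is still nontrivial, and it is generated by commutators of $G_{k-1}/Z'$ with \emph{all} of $G/Z'$, not just with $a$; you would also need $[G^0,G_{k-1}]\subseteq Z'$, which does not follow from what you have. So the contradiction is not there yet.

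What you actually need---after the reductions of \cref{rem: connected component} one has $Z\cap\Gamma=\{e\}$, so the claim becomes $[a,G_{k-1}]=G_k$---is exactly the statement proved in \cref{sec: proof} as the key input to Parry's criterion. That argument is not formal: it uses minimality to approximate an arbitrary $y\in G$ by $a^{t_n}\gamma_n$ in order to establish $[G,G_{k-1}\cap\Gamma]=\{e\}$, and then a discreteness argument on $\mathrm{Hom}\bigl(G_{k-1}/G_k(G_{k-1}\cap\Gamma),S^1\bigr)$ to conclude $[G^0,G_{k-1}]=\{e\}$ would force $G$ to be $(k-1)$-step. Your outline is right, but it defers rather than resolves the real difficulty; to close the proof, import that argument.
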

\begin{proof}
    This was established by Leibman in \cite{Leibmanpointwise}. 
\end{proof}

\section{Criterion for Lebesgue spectrum}
The following lemma is a corollary of the Peter-Weyl theorem and is due to Parry \cite[Lemma 1]{Parry}.
\begin{lem}
    Let $k\geq 1$ and let $G/\Gamma$ and $a\in G$, be such that $(G/\Gamma,\mu_{G/\Gamma},R_a)$ is an ergodic $k$-step nilsystem and suppose that $\Gamma$ contains no non-trivial normal subgroups of $G$. Then \begin{equation}\label{orthogonaldecomposition}L^2(G/\Gamma,\mu_{G/\Gamma}) = \bigoplus_{\gamma\in \widehat{G_k}} V_\gamma,
    \end{equation} where
    $$V_\gamma = \{f\in L^2(G/\Gamma,\mu_{G/\Gamma} : f(ux) = \gamma(u) f(x) \text { } \forall u\in G_k\}$$ and $\widehat{G_k}$ is the character group of $G_k$.
\end{lem}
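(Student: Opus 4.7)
The strategy is to realize $G_k$ as a group of unitary operators on $L^2(G/\Gamma,\mu_{G/\Gamma})$ via left translation, show that this group is compact and abelian, and then apply Pontryagin duality (Peter--Weyl) to obtain the isotypic decomposition. Since $G$ is $k$-step nilpotent, $[G,G_k]=G_{k+1}=\{e\}$, so $G_k$ lies in the center of $G$. In particular, every $u\in G_k$ normalizes $\Gamma$, and left translation $L_u(g\Gamma):=(ug)\Gamma$ is a well-defined, $\mu_{G/\Gamma}$-preserving transformation of $G/\Gamma$, giving a unitary action of $G_k$ on $L^2(G/\Gamma,\mu_{G/\Gamma})$.

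The key step is to prove that $G_k$ is compact. On one hand, since $G_k$ is central, every subgroup of $G_k$ is automatically normal in $G$; in particular $G_k\cap\Gamma$ is a normal subgroup of $G$ contained in $\Gamma$, and the standing hypothesis forces $G_k\cap\Gamma=\{e\}$. On the other hand, $G_k$ is a closed (normal) Lie subgroup of $G$, so $G_k\Gamma$ is closed in $G$ and $G_k\Gamma/\Gamma$ is a closed subset of the compact space $G/\Gamma$, hence itself compact. The natural identification $G_k/(G_k\cap\Gamma)\cong G_k\Gamma/\Gamma$, combined with $G_k\cap\Gamma=\{e\}$, then gives that $G_k$ itself is compact.

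Since $G_k$ is a compact abelian Lie group, its character group $\widehat{G_k}$ is discrete, and Peter--Weyl decomposes the unitary representation of $G_k$ on the separable Hilbert space $L^2(G/\Gamma,\mu_{G/\Gamma})$ as the orthogonal direct sum of the isotypic components
$$V_\gamma=\{f\in L^2(G/\Gamma,\mu_{G/\Gamma}):f(ux)=\gamma(u)f(x)\ \text{ for all } u\in G_k\},\qquad \gamma\in\widehat{G_k},$$
which is precisely \eqref{orthogonaldecomposition}. The main obstacle is the compactness of $G_k$, and this is where the no-normal-subgroup hypothesis on $\Gamma$ is used in an essential way: without it, $G_k$ could be non-compact (for example, contain a Euclidean factor $\R^n$), in which case the decomposition would have to be phrased as a direct integral over $\widehat{G_k}$ rather than a direct sum, and the statement as written would fail.
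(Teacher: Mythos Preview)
Your approach is exactly what the paper indicates: it does not prove the lemma but attributes it to Parry as a corollary of Peter--Weyl, and your outline (centrality of $G_k$, $G_k\cap\Gamma=\{e\}$, compactness of $G_k$, then Peter--Weyl) is the standard way to fill this in.

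One step needs a different justification. The implication ``$G_k$ is a closed normal Lie subgroup of $G$, so $G_k\Gamma$ is closed in $G$'' is false in general: already in the abelian case $G=\R^2$, the line $N=\{(t,t\sqrt{2}):t\in\R\}$ is closed and normal and $\Gamma=\Z^2$ is a lattice, yet $N+\Gamma$ is dense but not closed. What makes the conclusion true here is the Mal'cev rationality theorem for nilmanifolds: each term $G_i$ of the lower central series satisfies that $G_i\cap\Gamma$ is a uniform lattice in $G_i$, so $G_k\Gamma/\Gamma\cong G_k/(G_k\cap\Gamma)$ is automatically compact. Combined with your observation $G_k\cap\Gamma=\{e\}$, this gives compactness of $G_k$, and the rest of your argument is fine.
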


Our main tool for establishing Lebesgue spectrum is the following criterion of Parry.
    \begin{prop}[\cite{Parry}, Corollary 2]\label{ParryCriterion}
    Let $G/\Gamma$ and $a \in G$ be such that $(G/\Gamma, \mu_{G/\Gamma}, R_a)$ is an ergodic nilsystem, and assume that $G = \bigl\langle G^0, a \bigr\rangle$ and $\Gamma$ contains no nontrivial normal subgroups of $G$, which is an assumption that we can always make without loss of generality due to \cref{rem: connected component}). Let $G=G_1 \unrhd G_2 \unrhd \ldots \unrhd G_k \unrhd G_{k+1}=\{e\}$ denote the lower central series of $G$ and write $L^2(G/\Gamma, \mu_{G/\Gamma}) = \bigoplus_{\gamma \in \hat{G_k}}{V_{\gamma}}$ as in \eqref{orthogonaldecomposition}.
    Suppose that for every $u\in G_k$ there exists $b_u\in G_{k-1}$ such that $[a,b_u]=u$. Then for any nontrivial $\gamma \in \hat{G_k} \setminus \{1\}$, the maximal spectral type of $(V_\gamma,R_a)$ is Lebesgue.
   \end{prop}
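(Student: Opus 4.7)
The plan is to exploit the commutator relation $[a,b_u]=u$ at the Koopman-operator level to show that the maximal spectral type of $R_a$ on $V_\gamma$ is invariant under arbitrary rotations of $S^1$. Since $G_k$ is central in $G$ (because $G_{k+1}=\{e\}$), each $V_\gamma$ is $R_a$-invariant and every $u\in G_k$ acts on $V_\gamma$ as the scalar $\gamma(u)$. The identity $[a,b_u]=u$ rearranges to $ab_u=b_u a u$, so on $V_\gamma$ we obtain the Heisenberg-type commutation
\[
R_{b_u}R_a=\gamma(u)\,R_aR_{b_u},
\]
and a direct check using the centrality of $u$ also shows that $R_{b_u}$ preserves $V_\gamma$.

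Iterating yields $R_{b_u}^{-1}R_a^n R_{b_u}=\gamma(u)^{-n}R_a^n$. Hence for any $f\in V_\gamma$, setting $g=R_{b_u}f$, we get
\[
\langle R_a^n g,g\rangle
= \gamma(u)^{-n}\langle R_a^n f,f\rangle
= \int_{S^1}(\gamma(u)^{-1}t)^n\,d\sigma_f(t),
\]
so the spectral measure $\sigma_g$ is the pushforward of $\sigma_f$ under the rotation $t\mapsto \gamma(u)^{-1}t$ of $S^1$. By Halmos' theorem, fix $f_0\in V_\gamma$ with $\sigma_{f_0}\approx\sigma$, where $\sigma$ denotes the maximal spectral type of $(V_\gamma,R_a)$. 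Since $R_{b_u}f_0\in V_\gamma$, its spectral measure---the rotation of $\sigma_{f_0}$ by $\gamma(u)^{-1}$---is absolutely continuous with respect to $\sigma$. Writing $z_*\sigma$ for the pushforward of $\sigma$ under multiplication by $z\in S^1$, this yields $z_*\sigma\ll\sigma$ for every $z\in\gamma(G_k)$.

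It remains to upgrade ``$z\in\gamma(G_k)$'' to ``$z\in S^1$'' and deduce that $\sigma$ is Lebesgue. Under the standing hypothesis $G=\langle G^0,a\rangle$, \cref{prop: connected} shows that $G_k$ is connected; since $G_k$ is central and $\Gamma$ contains no nontrivial normal subgroup of $G$, the subgroup $G_k\cap\Gamma$ is normal in $G$ and thus trivial. Therefore $G_k$ embeds as a closed connected abelian subgroup of the compact space $G/\Gamma$, so $G_k$ is a torus, and nontriviality of the continuous character $\gamma$ forces $\gamma(G_k)=S^1$. Applying the previous step to both $u$ and $u^{-1}$ then gives $z_*\sigma\approx\sigma$ for every $z\in S^1$. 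A Fubini computation
\[
\int_{S^1}\sigma(z^{-1}A)\,dm(z)\;=\;m(A)\,\sigma(S^1)
\]
(with $m$ the Lebesgue measure on $S^1$), combined with the equivalences $z_*\sigma\approx\sigma$, then forces $\sigma(A)=0\Leftrightarrow m(A)=0$, i.e.\ $\sigma\approx m$. The main technical point is passing from the rotation-invariance of individual spectral measures to rotation-invariance of the maximal spectral type $\sigma$ itself, which we handle by fixing a single $f_0\in V_\gamma$ realizing $\sigma$.
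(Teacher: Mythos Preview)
Your proof is correct and follows essentially the same approach as the paper's: derive the commutation relation $R_{b_u}R_a=\gamma(u)R_aR_{b_u}$ on $V_\gamma$, deduce that spectral measures are rotated by $\gamma(u)$ under $R_{b_u}$, use connectedness of $G_k$ (via \cref{prop: connected}) to get $\gamma(G_k)=S^1$, and conclude by an averaging/Fubini argument that the maximal spectral type is rotation-invariant, hence Lebesgue. The only cosmetic differences are that you fix a single $f_0$ realizing the maximal spectral type (whereas the paper works with arbitrary $\nu\ll\sigma$), you take the extra step of noting $G_k$ is a torus (the paper more directly observes that a nontrivial continuous character of a connected group has image $S^1$), and your rotation appears as $\gamma(u)^{-1}$ rather than $\gamma(u)$, which is immaterial since $\gamma(G_k)$ is a group.
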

   \begin{proof}
   Let $\gamma$ be a nontrivial character and $f\in V_\gamma$. Let $u\in G_k$ and let $b_u$ be as in the proposition. Then $$ab_u = b_u a u$$ and more generally, for all $n\in \mathbb{Z}$ we have $$a^nb_u = b_ua^n u^n.$$
   For $f\in V_\gamma$ we let $\sigma_f$ denote the spectral measure of $f$ with respect to $R_a$. Since $b_u$ commutes with $G_k$, it maps $V_\gamma$ to itself. We then have
   
  $$\int_{S^1} \lambda^n d\sigma_{b_uf} = \left<a^n b_uf,b_uf\right> =  \left<b_ua^nu^nf,b_uf\right> = \left<a^n u^n f, f\right> = \int_{S^1} \gamma(u)^n\lambda^n d\sigma_f.$$
  That is, $\sigma_{b_u f}$ is equal to the measure $\sigma_f^{\gamma(u)}$ obtained by the change of variables $\lambda \mapsto \gamma(u) \lambda$. Let $\sigma$ be the maximal spectral type of $V_\gamma$. In view of \cref{prop: connected} the set $\gamma(G_k)$ is a connected subgroup of $S^1$. Since $\gamma \in \hat{G_k} \setminus \{1\}$ we conclude that $\gamma(G_k)\neq \{1\}$ and hence $\gamma(G_k)=S^1$. So
  if $\nu\ll \sigma$, then $\nu^s \ll \sigma$ for all $s\in S^1$. We claim from this observation that $\sigma$ is Lebesgue. First note that $\int_{S^1} \sigma^t dt$ is a rotation-invariant measure on $S^1$, and so it must be equal to Lebesgue. Therefore, the Lebesgue measure is absolutely continuous with respect to $\sigma$. For the other direction, let $A$ be a set with Lebesgue measure zero and assume for contradiction that $\sigma(A)\not = 0$. By definition it follows that $\sigma^t(t^{-1}A)> 0$, but since $\sigma^t$ is absolutely continuous with respect to $\sigma$ we get $\sigma(t^{-1}A)> 0 $ for all $t\in S^1$. Therefore, $\int_{S^1} \sigma(t^{-1}A) dt >0$, but the measure defined by this integral is Lebesgue which leads to a contradiction.
   \end{proof}

\section{Proof of the main result} \label{sec: proof}

\begin{proof}[Proof of \cref{main}]
The $k=2$ case is proved in \cite{HKM} (see also \cref{app: 2-step} below), so assume $k > 2$ and $(G/\Gamma, \mu_{G/\Gamma}, R_a)$ is an ergodic $k$-step nilsystem.
By \cref{rem: connected component}, we may assume $G = \bigl\langle G^0, a \bigr\rangle$ and $\Gamma$ does not contain any normal subgroup of $G$. In particular, $G_k$ is compact and connected, and $G_k \cap \Gamma = \{e\}$.

By \cref{prop: eigenfunctions}, and induction on the degree of nilpotency of $G$, it suffices to show
\[
L^2(G/\Gamma, \mu_{G/\Gamma}) \cong L^2(G/G_k\Gamma,\mu_{G/G_k\Gamma}) \oplus V
\]
where $V$ is a closed and invariant subspace of $L^2(G/\Gamma, \mu_{G/\Gamma})$ on which $R_a$ has infinite Lebesgue spectrum.
If $G$ is $(k-1)$-step nilpotent, then $V=0$ and there is nothing to prove. We therefore assume that this is not the case.
As in the proof of \cref{prop: eigenfunctions}, the space $V$ decomposes as $V = \bigoplus_{\gamma} V_\gamma$ with $V_\gamma = \left\{ v \in V : u \cdot v = \gamma(u)v~\text{for all}~u \in G_k \right\}$. Taking a quotient by $\ker{\gamma}$ for $\gamma \in \hat{G_k}$, we may assume that $G_k$ is a subgroup of $S^1$. Since $G_k$ is non-trivial and connected, we then actually have $G_k = S^1$.

We claim that $A:=\{[a,g] : g\in G_{k-1}\} = G_k$. Since $g\mapsto [a,g]$ is continuous, and $G_{k-1}$ is connected (by \cref{prop: connected}), we have that $A$ is connected.
Moreover, $x \mapsto [a, x]$ is a homomorphism from $G_{k-1}$ to $G_k$. Indeed,
\begin{equation}
    [a,xy] = a^{-1} y^{-1} x^{-1} a x y = a^{-1} y^{-1} (a a^{-1}) x^{-1} a x y = a^{-1} y^{-1} a [a,x] y = [a,x][a,y].
\end{equation}
Therefore, $A$ is a connected subgroup of $S^1$ and so must be all of $S^1$ or trivial.

Suppose for contradiction that $A = \{e\}$. We claim that this implies $[G, G_{k-1}] = \{e\}$, or equivalently that $G$ is $(k-1)$-step nilpotent which contradicts the assumption. Since $G = \bigl\langle G^0, a \bigr\rangle$, it suffices to show $[G^0, G_{k-1}] = \{e\}$. Fix $y \in G$. Since $(G/\Gamma, R_a)$ is minimal (cf.~\cref{prop: equivalences}), we may find sequences $\gamma_n \in \Gamma$ and $t_n \in \Z$ such that $a^{t_n}\gamma_n \to y$ in $G$. Let $\gamma \in G_{k-1} \cap \Gamma$.
Then, on the one hand, $\gamma a^{t_n}\gamma_n \to \gamma y$. On the other hand, $\gamma a^{t_n} \gamma_n = a^{t_n} \gamma \gamma_n=a^{t_n}\gamma_n \gamma [\gamma,\gamma_n]$. But since $[\gamma,\gamma_n]\in \Gamma\cap G_k =\{e\}$, we have $a^{t_n} \gamma_n \gamma \to y\gamma$.
Hence, $\gamma y = y \gamma$. Thus,
\begin{equation} \label{eq: trivial commutator}
    [G, G_{k-1} \cap \Gamma] = \{e\}.
\end{equation}
Now consider the map $s \mapsto [s, \cdot]$ from $G^0$ to $\mathrm{Hom}(G_{k-1}, G_k)$. By \eqref{eq: trivial commutator}, the homomorphism $[s, \cdot]$ is $(G_{k-1} \cap \Gamma)$-invariant for each $s \in G^0$. It is also $G_k$-invariant. Hence, $[s, \cdot]$ descends to a homomorphism from $H = G_{k-1}/G_k(G_{k-1} \cap \Gamma)$ to $G_k = S^1$. The group $H$ is a compact abelian group, so $\mathrm{Hom}(H, G_k) = \hat{H}$ is discrete. But since $G^0$ is connected, $s\mapsto [s,\cdot]$ is continuous, and $[e, \cdot]$ is trivial, we conclude $[G^0, G_{k-1}] = \{e\}$, yielding a contradiction. 

We deduce that $A=G_k$. In this case the assumption in \cref{ParryCriterion} is satisfied and so the maximal spectral type of $V_\gamma$ is Lebesgue. Since $G_k$ is connected it has infinitely many non-trivial characters, showing that the Lebesgue spectrum has infinite multiplicity. This completes the proof of \cref{main}.
\end{proof}

\appendix

\section{Maximal spectral type of $2$-step nilsystems} \label{app: 2-step}

In this appendix, we present a short proof of \cref{main} for 2-step nilsystems. The main fact which enables us to produce compact-Lebesgue spectrum in this case is that compact-Lebesgue spectrum is preserved by relatively independent joinings:

\begin{prop} \label{prop: joinings}
	Let $(X, \mu, T)$ and $(Y, \nu, S)$ be measure-preserving systems with compact-Lebesgue spectrum.
	Then the relatively independent joining $(X \times Y, \mu \times_Z \nu, T \times S)$ over any common factor $Z$
	also has compact-Lebesgue spectrum.
\end{prop}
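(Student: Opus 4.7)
The plan is to exploit the natural orthogonal decomposition of $L^2$ of a relatively independent joining induced by disintegrating the measure over $Z$. Writing $\mu = \int \mu_z\, d\pi(z)$ and $\nu = \int \nu_z\, d\pi(z)$, we have $\mu \times_Z \nu = \int \mu_z \times \nu_z\, d\pi(z)$, and integrating the fiberwise splitting
\[
L^2(\mu_z) \otimes L^2(\nu_z) = \C \oplus L^2_0(\mu_z) \oplus L^2_0(\nu_z) \oplus \bigl(L^2_0(\mu_z) \otimes L^2_0(\nu_z)\bigr)
\]
yields a $(T \times S)$-invariant orthogonal decomposition
\[
L^2(X \times_Z Y) = L^2(Z) \oplus L^2_0(X|Z) \oplus L^2_0(Y|Z) \oplus W,
\]
where $W$ is the ``pure joining'' subspace, densely spanned by products $fg$ with $\E(f|Z) = \E(g|Z) = 0$. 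The orthogonality of the three explicit summands follows immediately from the defining property $\int fg\, d(\mu \times_Z \nu) = \int \E(f|Z)\,\E(g|Z)\, d\pi$.

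The first three summands combine to $L^2(X) + L^2(Y)$, with the $T \times S$-action agreeing with $T$ on the $L^2(X)$-part and with $S$ on the $L^2(Y)$-part. Since both $L^2(X)$ and $L^2(Y)$ carry compact-Lebesgue spectrum by hypothesis, their sum likewise does, with the Lebesgue part having infinite multiplicity inherited from at least one of $X,Y$.

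The main work is the spectral analysis of $W$. Using the compact-Lebesgue splittings of the ambient spaces, I would further decompose $L^2_0(X|Z) = \mathcal{X}_d \oplus \mathcal{X}_l$ and $L^2_0(Y|Z) = \mathcal{Y}_d \oplus \mathcal{Y}_l$ by intersecting with the discrete and Lebesgue parts of $L^2(X)$ and $L^2(Y)$, and partition $W$ into four pieces labelled by these tags. The $(d,d)$-piece lies in the discrete spectrum, since products of eigenfunctions remain eigenfunctions. For the three pieces with at least one Lebesgue factor, the correlation identity
\[
\langle (T \times S)^n (fg),\, fg \rangle = \int \E\bigl(T^n f \cdot \overline{f}\,\big|\,Z\bigr)\,\E\bigl(S^n g \cdot \overline{g}\,\big|\,Z\bigr)\, d\pi
\]
rewrites as $\langle T^n f,\, f \cdot (A \circ \pi)\rangle_{L^2(X)}$ with $A(z) = \E(|g|^2|Z)(z)$ in the case $g$ is an eigenfunction, and analogously otherwise. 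This exhibits the spectral measure of $fg$ as a cross-spectral measure built from one factor that has spectral type absolutely continuous with respect to Lebesgue; Cauchy--Schwarz then forces the spectral type of $fg$ itself to be absolutely continuous with respect to Lebesgue.

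The main obstacle I anticipate is verifying \emph{infinite} multiplicity of the Lebesgue part of $W$. The plan is to use that at least one of $\mathcal{X}_l, \mathcal{Y}_l$ already decomposes into infinitely many pairwise orthogonal invariant subspaces of Lebesgue type, and to lift such a decomposition to $W$ by multiplication. Checking that the resulting family of invariant subspaces is pairwise orthogonal in the joining requires care, since the conditional expectations appearing in the joining inner product can destroy naive orthogonality; one will likely need to refine the decomposition using spectral projections of $T$ or $S$. Edge cases where a factor is purely discrete reduce $W$ to a discrete-spectrum piece that is simply absorbed into the compact part, so the compact-Lebesgue conclusion is preserved.
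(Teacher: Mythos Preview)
Your decomposition $L^2(X\times_Z Y)=L^2(Z)\oplus L^2_0(X|Z)\oplus L^2_0(Y|Z)\oplus W$ and the subsequent $(d/l)\times(d/l)$ case split on $W$ is exactly the skeleton the paper uses. The paper writes the same splitting as $L^2(Z)\oplus V\oplus W\oplus(V\otimes W)$ and then disposes of all three mixed pieces at once via the identity $\sigma_{v\otimes w}=\sigma_v*\sigma_w$: if either $\sigma_v$ or $\sigma_w$ is Lebesgue, so is their convolution. Your correlation identity is the more honest version of this, since in a \emph{relatively} independent joining the pairing $\langle v_1w_1,v_2w_2\rangle=\int \E(v_1\bar v_2\,|\,Z)\,\E(w_1\bar w_2\,|\,Z)\,d\pi$ is not literally the tensor inner product; recognizing this is a point in your favor.

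There is, however, a real gap. Your reduction $\langle (T\times S)^n(fg),fg\rangle=\langle T^nf,\,f\cdot(A\circ\pi)\rangle$ only works when $g$ is an eigenfunction, because that is what makes $\E(S^ng\cdot\bar g\,|\,Z)=\lambda^n\,\E(|g|^2\,|\,Z)$ independent of $n$ up to a scalar. Your ``analogously otherwise'' covers $(d,l)$ by symmetry, but it does \emph{not} cover the $(l,l)$ piece, where neither factor is an eigenfunction and the weight $\E(S^ng\cdot\bar g\,|\,Z)$ genuinely depends on $n$. You need a separate argument there; the cleanest route is to show that $n\mapsto\int \E(T^nf\bar f\,|\,Z)\,\E(S^ng\bar g\,|\,Z)\,d\pi$ is the Fourier transform of a measure absolutely continuous with respect to $\sigma_f*\sigma_g$ (hence with respect to Lebesgue whenever either factor is Lebesgue), which is what the paper's convolution formula is implicitly asserting.

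Finally, you have misidentified the main obstacle. Infinite multiplicity of the Lebesgue part of the \emph{joining} is not something you need to extract from $W$ at all: you already observed that $L^2(X)$ sits inside $L^2(X\times_Z Y)$ with its own infinite Lebesgue component, and that suffices. The worry about orthogonality in $W$ under conditional expectations is unnecessary for the statement you are proving.
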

\begin{proof}
	Let $\pi_1 : X \to Z$ and $\pi_2 : Y \to Z$ be factor maps.
	Let $K_T$ and $K_S$ be the Kronecker factors of $(X, \mu, T)$ and $(Y, \nu, S)$ respectively.
	Note that the meet $K_T \land Z$ is equal to the Kronecker factor of $Z$, which we denote by $K_Z$.
	Similarly, $K_S \land Z = K_Z$.
	
	We have the following splittings into invariant subspaces
	\begin{align*}
		L^2(X, \mu) & = L^2(K_Z) \oplus U \oplus V_1 \oplus V_2
		\intertext{and}
		L^2(Y, \nu) & = L^2(K_Z) \oplus U \oplus W_1 \oplus W_2,
	\end{align*}
	where
	\begin{align*}
		L^2(K_Z) \oplus U & = L^2(Z), \\
		L^2(K_Z) \oplus V_1 & = L^2(K_T), \\
		L^2(K_Z) \oplus W_1 & = L^2(K_S).
	\end{align*}
	By assumption, $\left. T \right|_{U \oplus V_2}$ and $\left. S \right|_{U \oplus W_2}$ have Lebesgue spectrum.
	
	We claim
	\begin{align*}
		L^2(X \times Y, \mu \times_Z \nu) = L^2(Z) \oplus V \oplus W \oplus (V \otimes W),
	\end{align*}
	where $V = V_1 \oplus V_2$, $W = W_1 \oplus W_2$, and $V \otimes W$ is the Hilbert space tensor product of $V$ and $W$.
	Indeed, suppose $f \in L^2(X \times Y, \mu \times_Z \nu)$.
	We may assume without loss of generality that $f(x,y) = g(x) h(y)$ for some functions $g \in L^2(X, \mu)$, $h \in L^2(Y, \nu)$.
	Using the splittings above, we may write
	\begin{align*}
		g = \tilde{g} \circ \pi_1 + v \qquad \text{and} \qquad h = \tilde{h} \circ \pi_2 + w,
	\end{align*}
	where $\tilde{g} = (\pi_1)_*g$, $\tilde{h} = (\pi_2)_*h$, $v \in V$, and $w \in W$.
	Since $\pi_1(x) = \pi_2(y)$ for $(\mu \times_Z \nu)$-a.e. $(x,y) \in X \times Y$, we have
	\begin{align*}
		f(x,y) = \tilde{g}(z) \tilde{h}(z) + v(x) \tilde{h}(z) + \tilde{g}(z) w(y) + v(x) w(y),
	\end{align*}
	where $z = \pi_1(x) = \pi_2(y)$.
	This decomposition consists of a sum of elements from $L^2(Z)$, $V$, $W$, and $V \otimes W$ as desired.
	
	Decomposing $L^2(Z)$, $V$, and $W$ further, the space $L^2(X \times Y, \mu \times_Z \nu)$ is equal to
	\begin{align*}
		L^2(K_Z) \oplus U \oplus V_1 \oplus V_2
		 \oplus W_1 \oplus W_2 \oplus (V_1 \otimes W_1) \oplus (V_1 \otimes W_2)
		 \oplus (V_2 \otimes W_1) \oplus (V_2 \otimes W_2).
	\end{align*}
	
	It is easily checked that $T \times S$ has discrete spectrum on the subspace
	\begin{align*}
		L^2(K_Z) \oplus V_1 \oplus W_1 \oplus (V_1 \otimes W_1).
	\end{align*}
	Moreover, on the orthogonal complement
	\begin{align*}
		U \oplus V_2 \oplus W_2 \oplus (V_1 \otimes W_2) \oplus (V_2 \otimes W_1) \oplus (V_2 \otimes W_2),
	\end{align*}
	$T \times S$ has Lebesgue spectrum.
	This is clear for the subspaces $U$, $V_2$, and $W_2$.
	For the remaining subspaces, note that for $v \in V$ and $w \in W$, one has $\sigma_{v \otimes w} = \sigma_v * \sigma_w$.
	Hence, if the spectral measure associated to either $v$ or $w$ is the Lebesgue measure,
	then $\sigma_{v \otimes w}$ is also the Lebesgue measure.
\end{proof}

The following description of the structure of 2-step nilsystems allows us, with the help of \cref{prop: joinings}, to reduce to the case that either $G$ is connected or the niltranslation is isomorphic to an affine transformation on a finite-dimensional torus.

\begin{thm} \label{thm: main thm}
	Let $G/\Gamma$ be a connected $2$-step nilmanifold, and suppose $(G/\Gamma, R_a)$ is a minimal nilsystem. Let $G^0$ denote the conncected component of the identity in $G$ and $\Gamma^0=\Gamma\cap G^0$.
	Then there exist $b \in G^0$, $d \in \N$, and a minimal unipotent affine transformation $S \colon \T^{2d} \to \T^{2d}$ such that
	$(G/\Gamma, R_a)$ is a factor of a relatively independent joining of $(G^0/\Gamma^0, R_b)$ with $(\T^{2d}, S)$.
\end{thm}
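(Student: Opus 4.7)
The plan is to use the connectedness of $G/\Gamma$ to decompose $R_a$ as translation by an element $b\in G^0$ twisted by conjugation by a lattice element $\gamma_0\in\Gamma$, and then---via the 2-step structure---to package that twist as a unipotent affine cocycle into a torus system $(\T^{2d},S)$ capturing precisely the data not visible to $(G^0/\Gamma^0,R_b)$.

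Connectedness of $G/\Gamma$ forces $G=G^0\Gamma$: the image of $G^0$ in $G/\Gamma$ is clopen and connected, hence all of $G/\Gamma$. Pick $b\in G^0$ and $\gamma_0\in\Gamma$ with $a=b\gamma_0$; the homeomorphism $G^0/\Gamma^0\to G/\Gamma$, $g\Gamma^0\mapsto g\Gamma$, then intertwines $R_a$ with the transformation $g\Gamma^0\mapsto b\,(\gamma_0 g\gamma_0^{-1})\,\Gamma^0$. By \cref{rem: connected component}(1)--(2) we may additionally assume $G=\langle G^0,a\rangle$ and $\Gamma\cap G_2=\{e\}$. Because $G$ is 2-step nilpotent, $G_2$ is central, so the map $c(g):=\gamma_0 g\gamma_0^{-1}g^{-1}$ is a continuous group homomorphism from $G^0$ into $G_2$ (this is the bilinearity of the commutator modulo $G_3=\{e\}$ used in the main proof), and one writes $\gamma_0 g\gamma_0^{-1}=c(g)\,g$.

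By \cref{prop: connected} the subgroup $G_2$ is connected, so $G_2/(G_2\cap\Gamma)$ is a torus. Moreover, $c$ is trivial on $\Gamma^0$ (since $c(\Gamma^0)\subseteq\Gamma\cap G_2=\{e\}$) and on $G_2^0=[G^0,G^0]$ (by centrality of $G_2$), so $c$ descends to a homomorphism $\bar c$ from the Kronecker factor $G^0/G_2^0\Gamma^0$ of $(G^0/\Gamma^0,R_b)$ to the central torus $G_2/(G_2\cap\Gamma)$. After padding dimensions if necessary so that both of these tori are $\T^d$, the homomorphism $\bar c$ lifts to an integer matrix $B\in M_d(\Z)$. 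Define $S\colon\T^{2d}\to\T^{2d}$ by $S(x,y)=(x+\alpha,y+Bx+\beta)$, with $\alpha\in\T^d$ the Kronecker image of $b$ and $\beta\in\T^d$ encoding $c(b)$; this $S$ is a 2-step unipotent affine transformation.

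Finally, both $(G^0/\Gamma^0,R_b)$ and $(\T^{2d},S)$ factor onto the common Kronecker torus $\T^d$ (via $g\Gamma^0\mapsto gG_2^0\Gamma^0$ and $(x,y)\mapsto x$, respectively), and the relatively independent joining over this common factor admits $(G/\Gamma,R_a)$ as a factor via the map $(g\Gamma^0,(x,y))\mapsto g\tau(y)\Gamma$ for any Borel section $\tau\colon\T^d\to G_2$ of the projection $G_2\to G_2/(G_2\cap\Gamma)$; equivariance follows from the cocycle identity $\gamma_0 g \gamma_0^{-1} = c(g) g$ of the previous step. The main obstacles I foresee are verifying minimality of the constructed $(\T^{2d},S)$ together with ensuring that $(G^0/\Gamma^0,R_b)$ is itself minimal (by choosing $b$ to project to a generator of the Kronecker factor), both of which should reduce to orbit-closure comparisons with the minimality of $(G/\Gamma,R_a)$; a secondary point is checking that the factor map is independent of the choice of Borel section, which uses the normalization $\Gamma\cap G_2=\{e\}$.
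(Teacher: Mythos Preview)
Your approach is essentially the same as the paper's: write $a=b\gamma_0$ with $b\in G^0$, use the commutator homomorphism $g\mapsto \gamma_0 g\gamma_0^{-1}g^{-1}$ (the paper writes it as $x\mapsto[x,\gamma]$) as a central cocycle into a torus, build the unipotent affine system from that cocycle, and exhibit the factor map by multiplying by the corresponding central element. The paper streamlines one step by taking $\T^d$ to be the \emph{image} torus $[X,\gamma]\subseteq G_2/(G_2\cap\Gamma)$ rather than padding the Kronecker and center tori to a common dimension---this yields the simpler form $S(u,v)=(u+\alpha,v+u)$ with no integer matrix $B$ or constant $\beta$---but otherwise the construction and the factor map $\pi(x,v)=\varphi^{-1}(v)\,x$ coincide with yours.
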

\begin{proof}
    Since $G/\Gamma$ is connected, there exists $\gamma \in \Gamma$ so that $b = a \gamma \in G^0$.
Then for any $x = g \Gamma \in X$, we have $ax = b\gamma^{-1}g\Gamma = bg[g, \gamma]\Gamma$.
Since $G$ is a 2-step nilpotent group, the commutator $[g, \gamma]$ belongs to the center of $G$, so $ax = [g, \gamma] bx$.
Moreover, one can check that if $g \equiv h \pmod{\Gamma}$, then $[g, \gamma] \equiv [h, \gamma] \pmod{\Gamma}$, so we may write $ax = [x, \gamma] bx$ without any ambiguity.

If $[x, \gamma] = \Gamma$ for every $x \in X$, then $R_a = R_b$, so there is nothing to prove.
Assume $[x, \gamma] \ne \Gamma$ for some $x \in X$.
Denote by $[X, \gamma]$ the set $\{[x, \gamma] : x \in X\} \subseteq G_2/(G_2 \cap \Gamma)$.
Then $[X, \gamma]$ is a connected abelian Lie group, so there is an isomorphism $\varphi : [X, \gamma] \to \T^d$ for some $d \in \N$.
Define $S : \T^{2d} \to \T^{2d}$ by $S(u,v) = (u+\alpha, v+u)$, where $\alpha = \varphi([b, \gamma])$.
The map $(u,v) \mapsto u$ is clearly a factor map $(\T^{2d}, S) \to (\T^d, R_{\alpha})$.
Moreover, the map $x \mapsto \varphi([x, \gamma])$ is a factor map $(X, R_b) \to (\T^d, R_{\alpha})$.
Indeed, for any $x \in X$,
\begin{align*}
	\varphi([bx, \gamma]) = \varphi([b, \gamma] [x, \gamma]) = \varphi([x, \gamma]) + \alpha.
\end{align*}

The relatively independent joining of $(X, R_b)$ and $(\T^{2d}, S)$ over the factor $(\T^d, R_{\alpha})$ is isomorphic to the system $(Z, T)$, where $Z = X \times \T^d$ and
\begin{align*}
	T(x,v) = (bx, v + \varphi([x, \gamma])).
\end{align*}

We claim $(X, R_a)$ is a factor of $(Z, T)$.
Define $\pi : Z \to X$ by $\pi(x,v) = \varphi^{-1}(v)x$.
Then
\begin{align*}
	\pi(T(x,v)) = \pi(bx, v + \varphi([x, \gamma]))
	 = \varphi^{-1}(v) [x, \gamma] bx = \varphi^{-1}(v) ax = a \varphi^{-1}(v)x = R_a \pi(x,v).
\end{align*}
Moreover, $\pi(x,1) = x$, so $\pi$ is surjective.
Thus, $\pi$ is a factor map $(Z, T) \to (X, R_a)$. Since $(X,R_b)$ and $(G^0/\Gamma^0,R_b)$ are isomorphic, the proof is complete.
\end{proof}

It is natural to ask what would be a version of Theorem \ref{thm: main thm} for higher values of $k$. The system $(\T^{2d}, S)$ appearing in the conclusion of Theorem \ref{thm: main thm} belongs to a class of systems called Weyl systems introduced in \cite{bll}. A \emph{Weyl system} $(Y,S)$ consists of a compact abelian Lie group $Y$ and a unipotent affine transformation $S : Y \to Y$. Weyl systems form a subclass of the class of nilsystems and enjoy many nice dynamical properties (see \cite[Section 3]{bll}). 
\begin{question}
    Let $G/\Gamma$ be a connected $k$-step nilmanifold, and suppose that $(G/\Gamma,R_a)$ is a minimal nilsystem. Let $G^0$ denote the connected component of the identity in $G$ and $\Gamma^0=\Gamma\cap G^0$. Do there exists $b\in G^0$ and a $k$-step Weyl system $(Y,S)$ such that $(G/\Gamma,R_a)$ is a factor of a relatively independent joining of $(G^0/\Gamma^0,R_b)$ with $(Y,S)$?
\end{question}

We can now prove \cref{main} when $k=2$.
We will first carry out the proof in the case that $X = G/\Gamma$ is a connected 2-step nilmanifold and then deduce the general case from this one.
By \cref{thm: main thm}, $(X, \mu_x, R_a)$ is a factor of a relatively independent joining of an ergodic nilsystem $(X, \mu_x, R_b)$ with $b \in G^0$ and an ergodic affine transformation $(\T^{2d}, \mu_{\T^{2d}}, S)$ for some $d \in \N$.
A factor of a system with compact-Lebesgue spectrum clearly has compact-Lebesgue spectrum, so it suffices to prove that the relatively independent joining of $(X, \mu_x, R_b)$ with $(\T^{2d}, \mu_{\T^{2d}}, S)$ has compact-Lebesgue spectrum. By \cref{prop: joinings}, we may further reduce to showing that each of the systems $(X, \mu_X, R_b)$ and $(\T^{2d}, \mu_{\T^{2d}}, S)$ has compact-Lebesgue spectrum.

To show that $(X, \mu_X, R_b)$ has compact-Lebesgue spectrum, we may assume $X = G/\Gamma$ with $G$ connected by \cref{rem: connected component}, since $b \in G^0$. In this case, one may follow the proof in \cref{sec: proof} as written, since $G_{k-1} = G$ is now a connected group.

The fact that $(\T^{2d}, \mu_{\T^{2d}}, S)$ has compact-Lebesgue spectrum follows from a straightforward calculation. We give the details for $S$ of the form $S(u,v) = (u + \alpha, v + u)$ (as appears in the proof of \cref{thm: main thm}) for completeness.
First, $L^2(\T^{2d})$ splits as $U \oplus V$, where $U \cong L^2(\T^d)$ consists of functions $f(u,v)$ that depend only on $u$.
The transformation $S$ is compact on $U$ with eigenvalues $n \cdot \alpha$, $n \in \Z^d$, corresponding to the eigenfunctions $e_{n,0}(u,v) = e(n \cdot u)$.
Now $V$ is spanned by the functions $e_{n,m}(u,v) = e(n \cdot u + m \cdot v)$ with $n, m \in \Z^d$, $m \ne 0$.
By direct calculation, the spectral measure $\sigma_{n,m}$ of $e_{n,m}$ has Fourier coefficients
\begin{align*}
    \hat{\sigma_{n,m}}(k) = \innprod{S^k e_{n,m}}{e_{n,m}}
     & = \int_{\T^d \times \T^d}{e_{n,m} \left( u+k\alpha, v + ku + \binom{k}{2}\alpha \right) \overline{e_{n,m}(u,v)}~du~dv} \\
    & = e \left( \left( kn + \binom{k}{2} m  \right) \cdot \alpha \right) \int_{\T^d}{e(k m \cdot u)~du} = 0
\end{align*}
for $k \ne 0$, since $km \in \Z^d \setminus \{0\}$ gives rise to a nontrival character on $\T^d$.
Hence, $\sigma_{n,m}$ is equal to the Lebesgue measure on $S^1$. \\

Now suppose $X = G/\Gamma$ is a (not necessarily connected) 2-step nilmanifold and $a \in G$ such that $(X, R_a)$ is minimal. Then $X$ has finitely many connected components, say $X = \bigcup_{i=1}^l{X_i}$, and $R_a^k(X_i) \subseteq X_i$ for $k = l! \in \N$. Let $f \in L^2(X)$ with $f$ orthogonal to the Kronecker factor of $(X, \mu_X, R_a)$. Then $f$ is orthogonal to the Kronecker factor on each ergodic component $(X_i, \mu_{X_i}, R_a^k)$ for the transformation $R_a^k$. The spectral measure of $f$ with respect to the transformation $R_a^k$ has Fourier coefficients
\begin{align*}
    \hat{\sigma_{f; R_a^k}}(n) = \int_X{f(a^{kn}x) \overline{f(x)}~d\mu_X}
     = \frac{1}{l} \sum_{i=1}^l{\int_{X_i}{f_i(a^{kn}x) \overline{f_i(x)}~d\mu_{X_i}}},
\end{align*}
where $f_i = f|_{X_i}$.
Since each of the systems $(X_i, \mu_{X_i}, R_a^{k})$ is an ergodic 2-step nilsystem on a connected nilmanifold $X_i$, the sequence
\begin{align*}
    c_i(n) = \int_{X_i}{f_i(a^{kn}x) \overline{f_i(x)}~d\mu_{X_i}}
\end{align*}
is the Fourier transform of a function $\varphi_i \in L^1(S^1)$. Therefore, $\sigma_{f; R_a^k}$ is absolutely continuous with respect to the Lebesgue measure on $S^1$, with Radon--Nikodym derivative $\varphi = \frac{1}{l} \sum_{i=1}^l{\varphi_i}$.

Now, for $A \subseteq S^1$, $\sigma_{f; R_a^k}(A) = \sigma_{f; R_a}(\{z \in S^1 : z^k \in A\})$. Hence, $\sigma_{f; R_a}(A) \le \sigma_{f; R_a^k}(\{z^k : z \in A\})$, so $\sigma_{f; R_a}$ is also absolutely continuous with respect to the Lebesgue measure on $S^1$. The following observation then completes the proof:

\begin{lem}
	Let $(X, \mu, T)$ be an ergodic measure-preserving system with an irrational eigenvalue.
	Let $\sigma$ be the maximal spectral type of $(X, \mu, T)$.
	Decompose $\sigma = \sigma_d + \sigma_s + \sigma_{ac}$ as a sum of
	discrete, singular, and absolutely continuous measures.
	Then $\sigma_{ac}$ is equivalent to Lebesgue measure.
\end{lem}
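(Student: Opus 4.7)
The plan is to use the irrational eigenvalue to produce a dense group of rotations of $S^1$ preserving the measure class of the maximal spectral type $\sigma$, and then to invoke ergodicity of irrational rotation on $(S^1,\mathrm{Leb})$ to pin down $\sigma_{ac}$.

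First I would fix an eigenfunction $\phi$ of $T$ with $T\phi=\lambda\phi$ and $\lambda\in S^1$ irrational. Since $(X,\mu,T)$ is ergodic, $|\phi|$ is $T$-invariant and hence constant, so one may normalize $|\phi|\equiv 1$ and view multiplication by $\phi$ as a unitary operator on $L^2(X,\mu)$. The identity $T^n(\phi g)=\lambda^n\phi\cdot T^n g$ then gives
\[
\langle T^n(\phi g),\phi g\rangle \;=\; \lambda^n\,\langle T^n g,g\rangle \qquad (n\in\Z),
\]
so the Fourier coefficients of $\sigma_{\phi g}$ equal $\lambda^n$ times those of $\sigma_g$; equivalently, $\sigma_{\phi g}=(R_\lambda)_*\sigma_g$ where $R_\lambda(z)=\lambda z$.

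Next, I would upgrade this to a statement about $\sigma$ itself. Choosing $h\in L^2$ with $\sigma_h=\sigma$ (which exists by the defining property of the maximal spectral type), the previous step shows $(R_\lambda)_*\sigma=\sigma_{\phi h}\ll\sigma$, and the same computation applied to the eigenfunction $\bar\phi$ (with eigenvalue $\lambda^{-1}$) gives $(R_{\lambda^{-1}})_*\sigma\ll\sigma$. Iterating, the measure class $[\sigma]$ is preserved by $R_{\lambda^n}$ for every $n\in\Z$. Because the Lebesgue decomposition $\sigma=\sigma_d+\sigma_s+\sigma_{ac}$ depends only on the class of $\sigma$ (atoms, continuous-singular part with respect to $\mathrm{Leb}$, and absolutely continuous part), each of $[\sigma_d]$, $[\sigma_s]$, $[\sigma_{ac}]$ is likewise $R_{\lambda^n}$-invariant.

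To conclude, I would set $\rho = d\sigma_{ac}/d\mathrm{Leb}$ and $E=\{z\in S^1:\rho(z)>0\}$. The invariance of $[\sigma_{ac}]$ under each $R_{\lambda^n}$ translates into $\mathrm{Leb}(E\triangle R_{\lambda^n}E)=0$ for all $n$, and since $\lambda$ is irrational, $R_\lambda$ is ergodic on $(S^1,\mathrm{Leb})$, so $\mathrm{Leb}(E)\in\{0,1\}$. In the first case $\sigma_{ac}=0$, and in the second $\sigma_{ac}$ and $\mathrm{Leb}$ share null sets and are therefore equivalent. The only step that requires any real care is the verification that multiplication by a unimodular eigenfunction acts as a rotation on the spectral side; once this is in hand the rest is a formal consequence of ergodicity of irrational rotation on the circle together with the canonicity of the Lebesgue decomposition under measure-class equivalence.
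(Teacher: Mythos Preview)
Your proof is correct and shares the paper's central observation: multiplication by a unimodular eigenfunction with eigenvalue $\lambda$ sends $\sigma_g$ to its rotate $(R_\lambda)_*\sigma_g$, so the measure class of $\sigma_{ac}$ is invariant under the dense subgroup $\{\lambda^n:n\in\Z\}\subseteq S^1$. Where the arguments diverge is only in the concluding step. You pass to the density $\rho=d\sigma_{ac}/d\mathrm{Leb}$ and argue that its essential support $E$ is $R_\lambda$-invariant modulo null sets, hence has Lebesgue measure $0$ or $1$ by ergodicity of the irrational rotation. The paper instead fixes $f$ with $\sigma_f\approx\sigma_{ac}$, forms the averaged measure $\nu(A)=\int_{S^1}\sigma_f(tA)\,dt$ (a scalar multiple of Lebesgue by rotation-invariance), and shows $\nu\ll\sigma_f$ using the continuity of $t\mapsto\sigma_f(tA)$ (available since $\sigma_f\ll\mathrm{Leb}$) together with density of the orbit $\{\lambda^n\}$. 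Your route is slightly more economical in that it sidesteps the continuity argument; the paper's averaging device is the same one it deploys earlier in verifying Parry's criterion. You also make the trivial alternative $\sigma_{ac}=0$ explicit, which the lemma as stated tacitly excludes.
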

\begin{proof}
	Let $f \in L^2(X)$ with $\sigma_f \approx \sigma_{ac}$.
	Let $g : X \to S^1$ be an eigenfunction $Tg = e(\alpha)g$ with $\alpha \notin \Q$.
	Then
	\begin{align*}
		\hat{\sigma}_{fg}(n) = \innprod{T^n(fg)}{fg}
		 = \innprod{e(n\alpha)g T^nf}{fg}
		 = e(n\alpha) \innprod{T^nf}{f}
		 = e(n\alpha) \hat{\sigma}_f(n).
	\end{align*}
	Hence, $\sigma_{fg}(A) = \sigma_f(e(\alpha)A)$.
	Define a measure
	\begin{align*}
		\nu(A) = \int_{S^1}{\sigma_f(tA)~dt}.
	\end{align*}
	By construction, $\nu$ is translation-invariant, so $\nu$ is a nonzero scalar multiple of Lebesgue measure.
	We want to show $\nu \ll \sigma_f$.
	Suppose $\sigma_f(A) = 0$.
	Since $\sigma_{ac} \ll \sigma_f$, we have $\sigma_{fg^n} \ll \sigma_f$ for every $n \in \Z$.
	Hence $\sigma_f(e(n\alpha) A) = 0$ for all $n \in \Z$.
	But $t \mapsto \sigma_f(tA)$ is a continuous function and $\{e(n\alpha) : n \in \Z\}$ is dense in $S^1$,
	so $\sigma_f(tA) = 0$ for every $t \in S^1$.
	By the definition of $\nu$, it follows that $\nu(A) = 0$.
	That is, $\nu \ll \sigma_f$ as claimed.
\end{proof}

\bibliographystyle{abbrv}
\bibliography{bibliography}
\end{document}